\documentclass[11pt, reqno]{amsart}
\usepackage{graphicx} 
\usepackage[english]{babel}
\usepackage[T1]{fontenc}
\usepackage[utf8]{inputenc}

\usepackage{mathrsfs}
\usepackage{amsmath}
\usepackage{amssymb}
\usepackage{amsfonts}
\usepackage{amsthm}
\usepackage{amscd}%
\usepackage{graphicx}
\usepackage{setspace}
\linespread{1.05}         
\usepackage[T1]{fontenc}
\usepackage{geometry}
\usepackage{mathtools}
\usepackage{faktor}
\usepackage[dvipsnames]{xcolor}
\usepackage[colorlinks,citecolor=NavyBlue,urlcolor=NavyBlue]{hyperref}
\usepackage{cleveref}
\usepackage[all]{xy}
\usepackage{enumitem}
\usepackage{comment}
\usepackage{ dsfont }

\geometry{hmargin=2.5cm,vmargin=3cm}

\makeatletter
\def\saveenum{\xdef\@savedenum{\the\c@enumi\relax}}
\def\resetenum{\global\c@enumi\@savedenum}
\makeatother

\numberwithin{equation}{section}

\newcommand{\bbN}{\mathbb{N}}

\newcommand{\cC}{\mathcal{C}}

\newcommand{\Es}{E^s}

\newcommand{\Eu}{E^u}

\newcommand{\Ws}{W^s}

\newcommand{\Wu}{W^u}

\newcommand{\inv}{^{-1}}

\newcommand{\intert}{\cap\kern-0.7em|\kern0.7em}

\newcommand{\bbP}{\mathbb{P}}

\newcommand{\norm}[1]{
	\lVert #1 \rVert
}

\newtheorem{thm}{Theorem}[section]
\newtheorem{cor}[thm]{Corollary}
\newtheorem{lemma}[thm]{Lemma}
\newtheorem{prop}[thm]{Proposition}

\newtheorem{conjecture}[thm]{\textbf{Conjecture}}

\newtheorem{defn}[thm]{Definition}

\theoremstyle{remark}

\newcommand{\simh}{\overset{h}{\sim}}

\newcommand{\CP}{\text{CP}(\sigma, \tilde{\sigma}_1, \tilde{\sigma}_2,  \rho, \tilde{\rho}_1, \tilde{\rho}_2, \eta)}

\newcounter{mysubequations}

\title{Finitness of measured homoclinic classes with large Lyapunov exponents for $\cC^2$ surface diffeomorphisms}
\author{Ghezal Matéo}
\date{}

\begin{document}
	
	\maketitle
	
	\bigskip
	
	\begin{center}
		\textbf{Abstract}
	\end{center}

	\bigskip
	
	In this paper, we show the finiteness of homoclinic classes carrying measures with large Lyapunov exponents for $\cC^2$ surface diffeomorphisms. As a consequence, we derive the finiteness of the set of ergodic measures of maximal entropy, in the case where the entropy of the system is large.
	
	\section{Introduction}\label{sec:intro}
	
	A famous theorem of Newhouse states that for a $\cC^{\infty}$ diffeomorphism on a compact, connected, boundaryless manifold, there exists an ergodic measure of maximal entropy \cite{newhouse1989continuity}. He asks whether the set of these measures is finite for surface diffeomorphisms with positive entropy.
	
	In this paper, we provide a quantitative answer to Newhouse's question for $\cC^2$ diffeomorphisms with large entropy. In \cite{buzzi2022measures}, Buzzi, Crovisier, and Sarig answered this question affirmatively using Yomdin theory. We use a simpler approach, relying on a theorem of Crovisier and Pujals \cite{crovisier2018strongly} and Pliss's lemma, to show that ergodic measures with entropy bounded away from 0 give positive mass to a non-uniformly hyperbolic set. Our approach establishes the finiteness of measured homoclinic classes with sufficiently large Lyapunov exponents, which is not present in \cite{buzzi2022measures}.
	
	\subsection{Main Result}
	
	In this paper, $M$ denotes a closed surface: a compact, connected, boundaryless, two-dimensional Riemannian manifold, and $d(.,.)$ denotes the associated distance. Let $f : M \rightarrow M$ be a $\cC^2$ diffeomorphism. Define the following quantity:
	\begin{equation*}
		R(f) = \max\big(\lim_{n \rightarrow +\infty}\frac{1}{n}\log\norm{df^n},\lim_{n \rightarrow +\infty}\frac{1}{n}\log\norm{df^{-n}}\big).
	\end{equation*}
	
	Let $\mu_1$ and $\mu_2$ be two hyperbolic invariant probability measures. We say that $\mu_1$ and $\mu_2$ are homoclinically related if their stable and unstable manifolds intersect transversely on a set of positive measure for each measure. See Section \ref{sec:pesin} for a more precise definition.\\
	
	We can now state our main theorem:
	\begin{thm}
		Let $f$ be a $\cC^2$ diffeomorphism of a closed surface. Let $(\mu_n)_{n \in \bbN}$ be any sequence of ergodic hyperbolic saddle measures such that $\mu_i$ is not homoclinically related to $\mu_j$ for $i\ne j$. Denote by $\lambda^u(\mu_i)>0>\lambda^s(\mu_i)$ their Lyapunov exponents. Then the number of measures $\mu_i$ satisfying:
		\begin{equation*}
			\min\big(\lambda^u(\mu_i),-\lambda^s(\mu_i)\big)>\frac{19}{20}R(f)
		\end{equation*}
		is finite.
		\label{thm:MainThm}
	\end{thm}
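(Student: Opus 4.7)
The plan is a proof by contradiction. Suppose there exist infinitely many pairwise non-homoclinically related ergodic hyperbolic saddle measures $(\mu_n)_{n\in\bbN}$ with $\min(\lambda^u(\mu_n), -\lambda^s(\mu_n)) > \frac{19}{20}R(f)$. Set $\lambda_0 = \frac{19}{20}R(f)$. I would first use Oseledets--Pesin theory together with Pliss's lemma, applied in both time directions, to extract from each $\mu_n$ a compact Pliss set $\hat\Lambda_n$ of positive $\mu_n$-measure on which the contraction and expansion rates $\|df^k|_{E^s}\| \le e^{-k\lambda'}$ and $\|df^{-k}|_{E^u}\| \le e^{-k\lambda'}$ hold for every $k \ge 1$, with $\lambda' < \lambda_0$ \emph{independent of} $n$, and with uniform local stable/unstable manifold sizes $\delta_0 > 0$. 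The independence is the crucial quantitative input: the output constants depend on $n$ only through the common lower exponent bound $\lambda_0$, which is the same for every $\mu_n$ by hypothesis.

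Next, using Katok's periodic shadowing theorem (or, if $\mu_n$ is already carried by a periodic orbit, taking that orbit directly), I would associate to each $\mu_n$ a hyperbolic saddle $p_n$ homoclinically related to $\mu_n$, whose Lyapunov exponents approximate those of $\mu_n$ and in particular still exceed $\lambda_0$ in absolute value. Since the $\mu_n$ are pairwise non-homoclinically related, the classes $H(p_n)$ are pairwise distinct. Passing to a subsequence, the orbits $\cO(p_n)$ converge in Hausdorff topology to a compact $f$-invariant set $K \subset M$. The uniform Pliss estimates together with standard cone-field arguments pass to the limit, so $K$ carries an invariant dominated splitting $E \oplus F$ with uniform contraction/expansion rates at least $\lambda'$. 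Since $M$ is two-dimensional and $K$ supports a dominated splitting with uniform hyperbolic rates, $K$ is a uniformly hyperbolic compact invariant set. Appealing to the theorem of Crovisier--Pujals cited in the introduction (or, at the uniformly hyperbolic level, Smale's spectral decomposition applied to the basic pieces of $K$), $K$ intersects only finitely many distinct homoclinic classes, contradicting the existence of infinitely many $H(p_n)$ accumulating on $K$.

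The principal obstacle lies in the first stage: ensuring the Pliss constants $\lambda'$ and $\delta_0$ are genuinely uniform in $n$. Pliss's lemma loses an amount governed by the gap between the input exponent $\lambda_0$ and the ambient ceiling $R(f)$, and the threshold $\frac{19}{20}$ is calibrated precisely so that the residual rate $\lambda'$ remains strictly positive and large enough to give a \emph{genuine} dominated splitting on $K$ rather than merely a degenerate one. A secondary technical point is upgrading the dominated splitting on $K$ to true uniform hyperbolicity in the surface setting; this uses the absence of nonhyperbolic periodic orbits in $K$ (which would carry exponents below $\lambda'$, violating the uniformity of the Pliss estimates) together with a standard surface argument. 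Once these two points are secured, the Crovisier--Pujals input is essentially a black-box finiteness statement on the resulting hyperbolic set.
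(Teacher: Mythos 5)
Your proof departs from the paper's after the Pliss construction, and the detour through periodic orbits and Hausdorff limits is where a genuine gap appears. The paper's mechanism is simpler and more direct: once the Pliss constants are fixed, one defines a \emph{single} compact set $\Lambda$ (the CP-hyperbolic set of Definition~\ref{def:CP-NUH-Set}) consisting of all points satisfying those Pliss-type conditions, independently of any particular measure. Every $\mu_n$ with exponents exceeding $\frac{19}{20}R(f)$ gives $\Lambda$ positive measure; on $\Lambda$ the stable and unstable manifolds have uniform size and vary continuously by the Crovisier--Pujals stable manifold theorem; and one covers $\Lambda$ by finitely many $\delta$-balls so small that any two measures charging the same ball are homoclinically related. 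Finiteness follows at once, with no appeal to Katok's closing lemma, Hausdorff limits, or spectral decomposition. You instead build measure-dependent sets $\hat\Lambda_n$, pass to periodic orbits $p_n$, take a Hausdorff limit $K$ of the orbits $\mathcal{O}(p_n)$, and claim $K$ is uniformly hyperbolic. That claim is not justified: the Pliss estimates hold only on a positive-density set of times along each orbit, not everywhere, so the full orbit closures --- and their Hausdorff limit --- need not carry uniform hyperbolic rates. Hyperbolic periodic orbits with exponents bounded away from zero can accumulate on sets lacking a dominated splitting, so ``uniform Pliss estimates pass to the limit'' is an assertion, not an argument. Moreover, even granting uniform hyperbolicity of $K$, the $p_n$ accumulate on $K$ but need not lie in $K$; concluding that they fall into finitely many homoclinic classes would require local maximality or a shadowing argument that you do not establish. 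Finally, you describe Crovisier--Pujals as a ``black-box finiteness statement,'' but Theorem~\ref{thm:CP-StableManifold} is a stable manifold theorem, not a finiteness theorem.

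A secondary issue: your first paragraph glosses over the need to \emph{intersect} several Pliss conditions (forward contraction, backward expansion, and the determinant control that the Crovisier--Pujals hypotheses demand). For the intersection to carry positive measure, each condition must hold with Birkhoff density strictly greater than $1 - 1/(\text{number of conditions})$. That is exactly where $\frac{19}{20}$ enters in the paper: the Pliss density bound $\frac{t-st}{1-st}>\frac{4}{5}$ is calibrated so that the intersection of four explicit conditions (plus a fifth implied angle bound) remains nonnegligible. Your stated rationale for the threshold (``keeping the residual rate $\lambda'$ strictly positive'') would only require $\lambda_0 < R(f)$ and does not account for the specific constant.
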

	
	We then obtain the following corollary regarding the number of measures of maximal entropy.
	\begin{cor}
		Let $f$ be a $\cC^2$ diffeomorphism of a closed surface. If:
		\begin{equation}
			h_{top}(f) > \frac{19}{20} R(f)
			\label{hypmainthm}
		\end{equation}
		then the number of ergodic measures of maximal entropy is finite.
		\label{cor:FinMME}
	\end{cor}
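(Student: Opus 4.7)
The plan is to reduce Corollary~\ref{cor:FinMME} to Theorem~\ref{thm:MainThm} via two ingredients: Ruelle's inequality, which guarantees that an ergodic measure of maximal entropy is a hyperbolic saddle measure whose Lyapunov exponents exceed $\tfrac{19}{20}R(f)$ in absolute value, and the uniqueness of the MME within a homoclinic class, which guarantees that distinct ergodic MMEs are pairwise non-homoclinically related.

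For the exponent bound, let $\mu$ be an ergodic measure of maximal entropy, so that $h_\mu(f) = h_{top}(f) > \tfrac{19}{20}R(f) > 0$. Since $M$ is a surface, Ruelle's inequality applied to $f$ and to $f^{-1}$ (both of which preserve $\mu$ with the same entropy) implies that any ergodic measure with positive entropy has one positive and one negative Lyapunov exponent and, moreover, $h_\mu(f) \le \lambda^u(\mu)$ and $h_\mu(f) \le -\lambda^s(\mu)$. Combining with \eqref{hypmainthm} yields $\min(\lambda^u(\mu), -\lambda^s(\mu)) > \tfrac{19}{20}R(f)$, so $\mu$ is a hyperbolic saddle measure falling within the scope of Theorem~\ref{thm:MainThm}.

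For the homoclinic separation step, I would invoke the fact that on a $\cC^r$ surface diffeomorphism ($r>1$) each homoclinic class of ergodic hyperbolic measures carries at most one ergodic MME. This follows from the countable-state symbolic coding of Sarig, extended to the $\cC^r$ setting by Ben Ovadia: an ergodic hyperbolic measure with Lyapunov exponents bounded away from zero lifts to an ergodic shift-invariant measure on a countable topological Markov shift; homoclinic relations between measures correspond to their lifts sitting inside a common irreducible component; and each irreducible component of such a shift carries at most one MME by classical symbolic dynamics. Hence any two distinct ergodic MMEs lie in different homoclinic classes and are pairwise non-homoclinically related. Assuming for contradiction that there were infinitely many ergodic MMEs, we would obtain an infinite sequence $(\mu_n)_{n\in\bbN}$ of pairwise non-homoclinically related hyperbolic saddle measures with $\min(\lambda^u(\mu_n), -\lambda^s(\mu_n)) > \tfrac{19}{20}R(f)$, contradicting Theorem~\ref{thm:MainThm}. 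The main obstacle to a self-contained proof is this second step: uniqueness of the MME per homoclinic class is a nontrivial symbolic-dynamics result that must be imported from the literature rather than reproved here.
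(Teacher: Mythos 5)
Your proposal is correct and follows essentially the same route as the paper: Ruelle's inequality (applied to $f$ and $f^{-1}$) upgrades the entropy bound to the Lyapunov-exponent bound \eqref{eq:mainProp}, and the uniqueness of the ergodic MME within each measured homoclinic class — which the paper simply cites from \cite{buzzi2022measures} and which you sketch via Sarig's coding — reduces the statement to Theorem~\ref{thm:MainThm} (equivalently Theorem~\ref{thm:FinitenessHomClas}).
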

	
	The constant $\frac{19}{20}$ is not optimal: Buzzi, Crovisier, and Sarig \cite{buzzi2022measures} proved that this constant can be replaced by $\frac{1}{2}$, which is obviously much better. In fact, \cite{buzzi2022measures} conjectured that this constant is sharp. However, the approach we use allows us to prove a different result, interesting because our assumption is weaker in the sense that we only need a lower bound on Lyapunov exponents.\\
	
	This result also extends to non-compact surfaces, possibly with boundaries, having a global attractor.
	
	\subsection{A Conjecture of Crovisier on Measures with Large Exponents}
	
	Let us attempt to motivate what follows and explain why this result is not merely a weaker version of \cite{buzzi2022measures}. In this paper, we prove Theorem \ref{thm:FinitenessHomClas}, which states that for any $\cC^2$ diffeomorphism $f$ of a closed surface $M$, the set of measured homoclinic classes (see Section \ref{sec:pesin} for definitions) carrying measures with Lyapunov exponents bounded away from 0 by $\frac{19}{20}R(f)$ is finite. In \cite{buzzi2022measures}, a similar statement is shown, replacing our condition on Lyapunov exponents with a condition on entropy.
	\begin{thm}[Buzzi-Crovisier-Sarig, \cite{buzzi2022measures}]
		Let $f$ be a $\cC^r$ diffeomorphism of a closed surface $M$. Then the number of homoclinic classes with entropy greater than $R(f)/r$ is finite.
	\end{thm}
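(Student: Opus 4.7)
The plan is to convert the entropy hypothesis into a uniform hyperbolic estimate shared by every homoclinic class of interest, and then to read off finiteness from compactness of $M$.

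First, by Ruelle's inequality on a surface applied to both $f$ and $f\inv$, every ergodic measure $\mu$ supported on a homoclinic class $H$ with $h_{top}(H) > R(f)/r$ can be chosen to satisfy
\begin{equation*}
\lambda^u(\mu) \geq h_\mu(f) > R(f)/r \qandq -\lambda^s(\mu) \geq h_\mu(f\inv) = h_\mu(f) > R(f)/r.
\end{equation*}
Hence the Lyapunov exponents of every such representative measure are uniformly bounded away from $0$ by $\chi_0 := R(f)/r$, a quantity independent of $H$.

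Second, I would invoke Yomdin's theorem: for a $\cC^r$ surface diffeomorphism, the local volume growth of smooth curves under iteration is controlled by $R(f)/r$ up to smooth reparametrizations with uniformly bounded $\cC^r$ norm. Used as in \cite{buzzi2022measures}, this produces \emph{uniform} Pesin data: there should exist constants $\epsilon_0, \ell_0, \delta_0 > 0$ depending only on $\chi_0$ and on $f$ such that every ergodic $\mu$ with $h_\mu(f) > R(f)/r$ gives positive mass to a Pesin block $\Lam_{\chi_0,\epsilon_0,\ell_0}$ whose points admit local stable and unstable Pesin manifolds of length at least $\delta_0$. Pliss's lemma can be inserted here to ensure that hyperbolic times at this uniform scale occur with positive frequency along typical orbits, whence one extracts periodic saddles in each class whose plaques share the same uniform size.

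Finally, a box-counting argument concludes: cover $M$ by finitely many open balls of radius much smaller than $\delta_0$. Each homoclinic class meeting the hypothesis contributes a periodic saddle whose stable and unstable plaques have length at least $\delta_0$ and whose splitting is quantitatively transverse at the saddle. If two such saddles $p, q$ in distinct classes lay inside a common ball, then $W^u(p)$ and $W^s(q)$ would meet transversely and likewise $W^s(p)$ and $W^u(q)$, forcing $p$ and $q$ to be homoclinically related and contradicting the distinctness of their classes. The number of classes is therefore bounded by the cardinality of the cover. The principal obstacle is the second step: extracting from Yomdin's reparametrization a stable manifold size $\delta_0$ depending only on $\chi_0$ and not on the individual measure. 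Standard Pesin theory produces plaque sizes that shrink with the Pesin level and are incomparable across measures; it is precisely this uniformity, obtained via $\cC^r$ control of the reparametrization, that the Yomdin machinery delivers and that is responsible for the ratio $R(f)/r$ appearing in the threshold.
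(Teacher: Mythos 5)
You are attempting to prove a theorem that this paper does \emph{not} prove: the statement is quoted from \cite{buzzi2022measures} only to contrast with the paper's own result, which replaces the entropy threshold $R(f)/r$ by the Lyapunov-exponent threshold $\tfrac{19}{20}R(f)$ (Theorem \ref{thm:MainThm}, Theorem \ref{thm:FinitenessHomClas}) and is proved with Crovisier--Pujals plus Pliss's lemma rather than Yomdin theory. So there is no in-paper proof to compare your attempt against.

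Judged on its own terms, your sketch is a fair high-level description of the Buzzi--Crovisier--Sarig route, and your final covering argument is essentially identical to the endgame of the proof of Theorem \ref{thm:FinitenessHomClas} in this paper (a fixed compact set carrying uniformly sized, uniformly transverse local stable and unstable manifolds, covered by finitely many small balls, so that any two measures charging a common ball are homoclinically related). Two caveats, however. First, converting the entropy hypothesis into the exponent bound $\min(\lambda^u(\mu),-\lambda^s(\mu)) > R(f)/r$ via Ruelle's inequality is not the lever that drives the argument at threshold $R(f)/r$: the Yomdin reparametrization estimates work off the entropy directly, and an exponent bound of only $R(f)/r$ is in general far below the $\tfrac{19}{20}R(f)$ needed for the Pliss-lemma route of Proposition \ref{prop:MainProp}, so you cannot short-circuit through that proposition. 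Second, the step you yourself flag as the ``principal obstacle''---extracting from Yomdin's theory a Pesin block of uniform plaque size $\delta_0$ and uniformly positive measure for \emph{every} qualifying $\mu$---is precisely the technical heart of \cite{buzzi2022measures}; as written your argument assumes it rather than proving it, so the proposal as it stands is a correct outline but not a proof.
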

	
	In particular, our condition allows us to address measures with small entropy but sufficiently large Lyapunov exponents. Ruelle's inequality (see Section \ref{sec:pesin} or \cite{ruelle1978inequality}) shows that the condition on Lyapunov exponents implies the one on entropy. Sylvain Crovisier then conjectured the following:
	\begin{conjecture}
		Let $f$ be a $\cC^r$ diffeomorphism and let $(\mu_n)_{n\geq1}$ be any sequence of saddle ergodic invariant measures such that $\mu_i$ is not homoclinically related to $\mu_j$ for $i\ne j$. Denote by $\lambda^u(\mu_i)>0>\lambda^s(\mu_i)$ their Lyapunov exponents. Then the number of measures $\mu_n$ satisfying:
		\begin{equation*}
			\min\big(\lambda^u(\mu_n),-\lambda^s(\mu_n)\big) >\frac{R(f)}{r}
		\end{equation*}
		is finite.
	\end{conjecture}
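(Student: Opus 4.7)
The plan is to push the Pliss-type strategy behind Theorem~\ref{thm:MainThm} from the $\cC^2$ setting to arbitrary $\cC^r$, aiming to replace the suboptimal constant $\tfrac{19}{20}$ by the conjecturally sharp $\tfrac{1}{r}$. Suppose, for contradiction, that there exists an infinite sequence of ergodic saddle measures $(\mu_n)$, pairwise homoclinically unrelated, satisfying $\min(\lam^u(\mu_n), -\lam^s(\mu_n)) > R(f)/r$. After extracting a subsequence I may fix a single $\chi$ with $R(f)/r < \chi$ and $\chi$ uniformly below $\lam^u(\mu_n)$ (resp.\ above $-\lam^s(\mu_n)$) for all $n$.

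The first step is to invoke a $\cC^r$ version of the Crovisier--Pujals theorem to obtain a dominated splitting on the closure of $\bigcup_n \supp(\mu_n)$, and then apply Pliss's lemma to produce, for each $n$, a subset $\Lambda_n \subset \supp(\mu_n)$ of positive $\mu_n$-measure consisting of bi-infinite $\chi$-hyperbolic times along that splitting. This part would directly generalise the $\cC^2$ argument used in the present paper.

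The crucial step, and the main obstacle, is to bound uniformly from below the sizes of the Pesin local stable and unstable manifolds at points of $\Lambda_n$ under the borderline assumption $\chi > R(f)/r$. The $\cC^2$ distortion estimate employed here only exploits the first nontrivial Taylor term along an orbit and thus leaves a usable gap proportional to $R(f)-\chi$, which is what yields the constant $\tfrac{19}{20}$. To recover the sharp threshold one must upgrade the distortion estimate to order $r$: a Taylor expansion of $f^k$ of order $r$ along a hyperbolic orbit produces an error of size $e^{-r\chi k}$, which is dominated by the hyperbolic decay $e^{-\chi k}$ precisely when $r\chi > R(f)$. Implementing this rigorously likely requires either a $\cC^r$ Yomdin-style reparameterisation of Pesin blocks, or a $\cC^r$ Hadamard--Perron argument with sharp constants; either approach is substantially more delicate than the $\cC^2$ Pliss argument used here, and is plausibly why the conjecture remains open.

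Granted such a uniform size $\rho_0 > 0$, finiteness follows from a standard pigeonhole argument already implicit in the present paper: cover $M$ by finitely many balls of radius $\rho_0/4$; if infinitely many $\mu_n$ were to place Pliss points in a common ball, two of their local unstables and stables would cross transversely by uniform domination of the splitting, forcing a homoclinic relation between distinct $\mu_i$ and $\mu_j$ and contradicting the hypothesis.
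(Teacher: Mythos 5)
This statement is labeled as a \emph{conjecture} in the paper and is not proved there; the paper explicitly says its Theorem~\ref{thm:MainThm} is only ``a step toward this conjecture, replacing $1/r$ with a larger constant.'' So there is no ``paper's own proof'' to compare against, and you correctly end your proposal without claiming to close the argument. Your identification of the bottleneck --- that the Crovisier--Pujals stable manifold theorem, which this paper uses, is a $\cC^{1+\alpha}$ distortion statement and gives no leverage from higher regularity, whereas the sharp threshold $R(f)/r$ ought to exploit $\cC^r$ smoothness via an $r$-th order reparametrization or Hadamard--Perron estimate --- is the right diagnosis, and is consistent with the paper's own remarks about \cite{buzzi2022measures} and \cite{luo2024ergodic} using Yomdin-type methods to get a $1/r$-type constant.

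That said, two points in your sketch would need correction before even the reduction works cleanly. First, the Crovisier--Pujals theorem does not produce ``a dominated splitting on the closure of $\bigcup_n \supp(\mu_n)$''; it is a pointwise criterion yielding stable manifolds at points with Pliss-type control on $\norm{d_xf^n_{|E}}$ and on the normalized determinant, and the paper's argument then shows each individual $\mu_n$ gives positive mass to a single compact CP-hyperbolic set defined by fixed constants $(\sigma,\tilde\sigma_i,\rho,\tilde\rho_i,\eta)$. The uniformity needed for the pigeonhole step comes from that set's compactness and from the continuity in Theorem~\ref{thm:CP-StableManifold}, not from a global dominated splitting. Second, the Pliss computation in the proof of Proposition~\ref{prop:MainProp} forces the constant $19/20$ precisely because the density of Pliss times must exceed $4/5$ in two separate conditions simultaneously; pushing $t$ down to $1/r$ would make the interval $\big(\tfrac{3}{4t},\tfrac{t-4/5}{t/5}\big)$ empty, so the Pliss bookkeeping itself would need restructuring, not just the stable manifold input. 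Your proposal is an honest account of why the conjecture is open rather than a proof, and the missing ``crucial step'' you flag is indeed what separates this paper's result from the conjectured statement.
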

	
	Our result is a step toward this conjecture, replacing $1/r$ with a larger constant.
	
	\subsection{Other Approaches and Related Work}
	
	One of the main purposes of this paper is to prove a finiteness result for certain classes of measures with large entropy (or exponents). There are different approaches to achieving such results. The first one, used in \cite{buzzi2022measures}, is a topological approach. Another, more quantitative approach, involves studying the continuity of Lyapunov exponents, as explored in \cite{buzzi2022continuity}. The approach used in this paper exploits the idea that sufficiently large Lyapunov exponents allow control over the geometry of stable and unstable manifolds on a uniform set of positive measure.\\
	
	In a recent work, C. Luo and D. Yang \cite{luo2024ergodic} proved that the set of points with a uniformly large local unstable manifold has uniform measure for all invariant ergodic measures with sufficiently large entropy. They used the Yomdin-Burguet reparametrization lemma, which is more intricate than our approach. It seems that their result also implies the finiteness of measures of maximal entropy for diffeomorphisms with sufficiently large entropy, yielding a better bound than ours.
	
	\subsection{Outline of the Paper}
	In Section \ref{sec:pesin}, we recall key aspects of Pesin theory and introduce homoclinic relations for measures.
	
	The third section is devoted to proving the main proposition, Proposition \ref{prop:MainProp}. After stating  Crovisier-Pujals stable manifold theorem, we provide the proof of Proposition \ref{prop:MainProp} using Pliss lemma.
	
	In Section \ref{sec:mme}, we derive Theorem \ref{thm:FinitenessHomClas}, which establishes the finiteness of homoclinic classes containing measures with large exponents, as a consequence of Proposition \ref{prop:MainProp}.
	
	\subsection{Acknowledgments}
	We thank Sylvain Crovisier, who proposed the problem to the author, for very helpful discussion and for his comments on previous versions of this note.
	
	\section{Homoclinic Relations and Pesin Theory}\label{sec:pesin}
	
	In this section, $f$ is a $\cC^r, \ r>1$, diffeomorphism of a compact, connected, boundaryless manifold $M$ of dimension $2$. We refer to \cite{katok1995introduction}, Chapter S, for background on Pesin theory.
	
	\medskip
	
	\subsection{Hyperbolic Measures}
	
	Denote by $\bbP(f)$ the set of $f$-invariant probability measures. We begin by recalling Oseledets's theorem. Given $\mu \in \bbP(f)$, there exists a measurable $df$-invariant splitting $T_xM = \Es_x \oplus \Eu_x$ for $\mu$-almost every $x \in M$. This splitting satisfies the following property: there exist real numbers $\lambda^s(x,\mu) \leq \lambda^u(x,\mu)$ such that for all $v \in E^{u/s}_x$, we have:
	\begin{equation*}
		\lim_{n\rightarrow\pm \infty} \frac{1}{|n|}\log \norm{d_xf^n v} = \lambda^{u/s}(x,\mu).
	\end{equation*}
	These numbers are called the Lyapunov exponents of $\mu$ and are constant almost everywhere when $\mu$ is ergodic. In this case, we do not explicitly write the dependence on $x$.\\
	
	An invariant measure $\mu$ is said to be hyperbolic if its Lyapunov exponents are nonzero $\mu$-almost everywhere. A measure is of saddle type if $\lambda^s(x,\mu) < 0 < \lambda^u(x,\mu)$ $\mu$-almost everywhere. We denote by $\bbP^h(f)$ the set of hyperbolic $f$-invariant measures and by $\bbP^s(f)$ the set of saddle-type $f$-invariant measures. If we restrict to ergodic measures, we write $\bbP^{h/s}_e(f)$. Ruelle's inequality bounds the Lyapunov exponents in terms of entropy:
	
	\begin{thm}[Ruelle's Inequality]
		Let $\mu \in \bbP_e^s(f)$. Then:
		\begin{equation*}
			h(f,\mu) \leq \min\big(-\lambda^s(\mu),\lambda^u(\mu)\big).
		\end{equation*}
	\end{thm}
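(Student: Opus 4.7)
The plan is to reduce to the classical Ruelle inequality and exploit the symmetry between $f$ and $f^{-1}$. In dimension two the sum of positive Lyapunov exponents of a saddle ergodic measure is simply $\lambda^u(\mu)$, so Ruelle's original inequality already reads $h(f,\mu) \le \lambda^u(\mu)$. Applying the same bound to the $\cC^2$ diffeomorphism $f^{-1}$, whose Oseledets exponents at $\mu$-a.e.\ point are $-\lambda^u(\mu) < 0 < -\lambda^s(\mu)$, and using that the Kolmogorov--Sinai entropy is invariant under inversion, $h(f^{-1},\mu) = h(f,\mu)$, yields the symmetric bound $h(f,\mu) \le -\lambda^s(\mu)$. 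Taking the minimum of the two inequalities proves the theorem.

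To justify the underlying inequality $h(f,\mu) \le \lambda^u(\mu)$ I would follow the standard partition-counting argument. Fix $\epsilon > 0$ and, by Egorov combined with Oseledets, restrict to a compact Pesin block $\Lambda_\epsilon$ of arbitrarily large measure on which $\|d_x f|_{\Eu_x}\| \le e^{\lambda^u(\mu)+\epsilon}$ uniformly and on which the Oseledets splitting varies continuously. Choose a finite measurable partition $\mathcal{P}$ of $M$ whose atoms have diameter at most $\delta$, with $\delta$ so small that any atom meeting $\Lambda_\epsilon$ fits into a single Pesin chart. A mean-value estimate then shows that the image under $f$ of such an atom is covered by a box of unstable length at most $C\,\delta\, e^{\lambda^u(\mu)+\epsilon}$ and of bounded stable length, hence intersects at most $C'\, e^{\lambda^u(\mu)+\epsilon}$ atoms of $\mathcal{P}$. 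Iterating, the refined partition $\mathcal{P}_n = \bigvee_{i=0}^{n-1} f^{-i}\mathcal{P}$ has at most $(\#\mathcal{P})\,(C'\,e^{\lambda^u(\mu)+\epsilon})^n$ atoms intersecting $\Lambda_\epsilon$, and combining this with $H_\mu(\mathcal{P}_n) \le \log \#\mathcal{P}_n$ and absorbing the (arbitrarily small) contribution of $M \setminus \Lambda_\epsilon$ gives $h_\mu(f,\mathcal{P}) \le \lambda^u(\mu) + \epsilon$. Letting $\delta, \epsilon \to 0$ yields the bound.

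The main obstacle in a fully detailed proof is the uniform geometric estimate on the Pesin block: one must control how many partition atoms are intersected by iterated images while accounting for both the expansion in the unstable direction and the effect of distortion, uniformly on a set of large measure. This is standard Pesin machinery (see for instance Katok--Hasselblatt). Since Ruelle's inequality is a classical result, in practice one would simply cite \cite{ruelle1978inequality} rather than reproduce the proof here.
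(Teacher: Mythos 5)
Your proposal is correct and follows the same route the paper intends: the paper states the two-sided form of Ruelle's inequality without proof and simply cites Ruelle, and the standard way to get the $\min$ is exactly your reduction — classical Ruelle on a surface gives $h(f,\mu)\le\lambda^u(\mu)$ since $\lambda^u$ is the only positive exponent for a saddle ergodic measure, and applying the same to $f^{-1}$ (whose exponents are $-\lambda^s(\mu)>0>-\lambda^u(\mu)$) together with $h(f^{-1},\mu)=h(f,\mu)$ gives $h(f,\mu)\le -\lambda^s(\mu)$. Your partition-counting sketch of the underlying inequality is the standard one and is fine as a citation-level justification.
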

	
	\subsection{Pesin Blocks and Invariant Manifolds}
	We recall some fundamental results from Pesin theory. Given a hyperbolic saddle ergodic measure $\mu \in \bbP_e^s(f)$, there exists a family of compact measurable sets $(K_n)_{n \in \bbN}$, called Pesin blocks, with the following properties:
	
	\begin{itemize}[label={--}]
		\item For each $n$, $f\inv(K_n) \cup K_n \cup f(K_n) \subset K_{n+1}$, ensuring that the measurable set $Y = \cup_n K_n$ is invariant.
		\item Each $K_n$ admits a continuous splitting $TM_{|K_n} = \mathcal{E}^s \oplus \mathcal{E}^u$, and for any saddle hyperbolic measure $\mu$, we have $E^{u/s}_x = \mathcal{E}^{u/s}_x$ for $\mu$-almost every $x \in K_n$.
		\item There exist two continuous ($\cC^r$) families of embedded discs $(\Ws_{loc}(x))_{x \in K_n}$ and $(\Wu_{loc}(x))_{x \in K_n}$, called stable and unstable local manifolds, tangent to $\mathcal{E}^s$ and $\mathcal{E}^u$. These are $f$-invariant: $f(\Ws_{loc}(x)) \subset \Ws_{loc}(f(x))$ and $f\inv(\Wu_{loc}(x)) \subset \Wu_{loc}(f\inv(x))$.
	\end{itemize}
	
	\subsection{Homoclinic Relations}
	
	Following Newhouse \cite{newhouse1972hyperbolic} and Buzzi-Crovisier-Sarig \cite{buzzi2022measures}, we define homoclinic relations for hyperbolic saddle measures. Let $U,V\subset M$ be two sub-manifolds, we denote by $U\intert V$ the transverse intersection between $U$ and $V$.
	
	\begin{defn}
		Let $\mu_1, \mu_2 \in \mathbb{P}_e^s(f)$. We write $\mu_1 \preceq \mu_2$ if there exist measurable sets $A_1, A_2 \subset M$ such that $\mu_i(A_i) > 0$ and for all $(x_1, x_2) \in A_1 \times A_2$, $\Wu(x_1) \intert \Ws(x_2) \neq \emptyset$.
		
		We say that $\mu_1$ and $\mu_2$ are homoclinically related if $\mu_1 \preceq \mu_2$ and $\mu_2 \preceq \mu_1$, and we write $\mu_1 \simh \mu_2$.
	\end{defn}
	
	\begin{prop}
		The homoclinic relation for ergodic hyperbolic measures is an equivalence relation.
		\label{equivrelation}
	\end{prop}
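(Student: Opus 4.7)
The plan is to verify the three axioms of an equivalence relation in turn. Symmetry is immediate from the symmetric form of the definition of $\simh$, so the real work is in reflexivity and transitivity.

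For reflexivity, I need to exhibit, given $\mu \in \bbP_e^s(f)$, measurable sets $A_1, A_2$ of positive $\mu$-measure such that $\Wu(x_1) \intert \Ws(x_2) \neq \emptyset$ for every $(x_1, x_2) \in A_1 \times A_2$. I would use the Pesin block structure recalled just before: pick $n$ with $\mu(K_n) > 0$, and exploit the fact that on $K_n$ the splitting $\mathcal{E}^s \oplus \mathcal{E}^u$ is continuous, the local stable and unstable discs $\Ws_{loc}(x)$, $\Wu_{loc}(x)$ have uniformly bounded size $\rho_n > 0$, and they depend continuously on $x \in K_n$. Covering $K_n$ by finitely many sets of diameter less than a suitable $\delta_n \ll \rho_n$ and choosing one piece $A$ of positive $\mu$-measure, a standard transversality argument shows that for any two points $x_1, x_2 \in A$ the discs $\Wu_{loc}(x_1)$ and $\Ws_{loc}(x_2)$ are $\cC^1$-close to transverse graphs over $\mathcal{E}^u_{x_1}$ and $\mathcal{E}^s_{x_2}$ in a common chart, hence meet transversely at a single point. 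Taking $A_1 = A_2 = A$ yields $\mu \preceq \mu$.

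For transitivity, assume $\mu_1 \simh \mu_2$ and $\mu_2 \simh \mu_3$, with associated positive-measure sets $A_1^{12}, A_2^{12}$ for $\mu_1 \preceq \mu_2$ and $A_2^{23}, A_3^{23}$ for $\mu_2 \preceq \mu_3$. Since $\mu_2$ is ergodic and both $A_2^{12}$ and $A_2^{23}$ have positive $\mu_2$-measure, there exists $n \geq 0$ such that $B := A_2^{12} \cap f^{-n}(A_2^{23})$ has positive $\mu_2$-measure. Fix any $y_2 \in B$ and set $z_2 = f^n(y_2) \in A_2^{23}$. For any $x_1 \in A_1^{12}$ the unstable manifold $\Wu(x_1)$ meets $\Ws(y_2)$ transversely, and for any $x_3 \in A_3^{23}$ the unstable manifold $\Wu(z_2)$ meets $\Ws(x_3)$ transversely. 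The $\lambda$-lemma in the Pesin setting (iterating a small disc in $\Wu(x_1)$ near the transverse intersection with $\Ws(y_2)$ produces discs accumulating in the $\cC^1$ topology on compact pieces of $\Wu(y_2)$, and then of $\Wu(z_2) = f^n(\Wu(y_2))$) then forces a transverse intersection of some forward iterate of that disc with $\Ws(x_3)$. Since $\Wu(x_1)$ is forward-invariant up to reparametrisation, this point lies in $\Wu(x_1) \intert \Ws(x_3)$. Applying this to all $(x_1,x_3) \in A_1^{12} \times A_3^{23}$ gives $\mu_1 \preceq \mu_3$; the same argument with the roles of $\mu_1$ and $\mu_3$ reversed gives $\mu_3 \preceq \mu_1$, hence $\mu_1 \simh \mu_3$.

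The most delicate step will be transitivity, and more specifically the invocation of the $\lambda$-lemma for non-uniformly hyperbolic points: one must ensure that the disc produced in $\Wu(y_2)$ actually lies in a Pesin block where the stable foliation through $x_3$ is well-defined with the correct transversality, and that the required forward iterates keep uniform size. This is classical in Pesin theory (and is essentially the argument underlying the fact that $\simh$ agrees with the Newhouse homoclinic relation for the approximating periodic orbits given by Katok's horseshoe theorem), but it is the only non-formal input in the proof.
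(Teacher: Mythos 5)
The paper does not give its own proof of this proposition; it only remarks that the proof ``uses the inclination lemma'' and defers to \cite[Proposition~2.11]{buzzi2022measures} for details. Your sketch takes exactly that route — Pesin blocks for reflexivity, the $\lambda$-lemma for transitivity — so at the level of method you are consistent with what the paper intends, and your treatment of reflexivity and symmetry is fine.

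There is, however, a genuine error in the last step of your transitivity argument. The sentence ``Since $\Wu(x_1)$ is forward-invariant up to reparametrisation, this point lies in $\Wu(x_1)\intert\Ws(x_3)$'' is false: for a non-periodic $x_1$, $f^k(\Wu(x_1))=\Wu(f^k(x_1))$ is a \emph{different} immersed curve from $\Wu(x_1)$ (they coincide only when $f^k(x_1)\in\Wu(x_1)$). What the $\lambda$-lemma actually yields is a transverse intersection of $f^k(D)\subset\Wu(f^k(x_1))$ with $\Ws(f^{k-n}(x_3))$: to bring the iterated disc near the heteroclinic point $q\in\Wu(z_2)\cap\Ws(x_3)$ one pushes $q$ forward by $k-n$ along the unstable leaf of $z_2=f^n(y_2)$, and the stable leaf it sits on at time $k-n$ is $\Ws(f^{k-n}(x_3))$, not $\Ws(x_3)$. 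Applying $f^{-k}$ then gives $\Wu(x_1)\intert\Ws(f^{-n}(x_3))\neq\emptyset$, not the relation you wrote. The fix is routine but must be stated: $n$ is a single integer fixed once (chosen so that $A_2^{12}\cap f^{-n}(A_2^{23})$ has positive $\mu_2$-measure), so the witnessing pair for $\mu_1\preceq\mu_3$ is $\big(A_1^{12},\,f^{-n}(A_3^{23})\big)$, and $f^{-n}(A_3^{23})$ still has positive $\mu_3$-measure by invariance. Make this shift explicit rather than invoking an invariance of $\Wu(x_1)$ that does not hold. The other delicate point you flag — running the $\lambda$-lemma along Pesin-block return times so the iterated disc keeps pace with the exponentially receding point $f^{k-n}(q)$ — is the real technical content, and it is what \cite[Proposition~2.11]{buzzi2022measures} carries out.
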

	
	This is proved using the inclination lemma. A complete proof can be found in \cite[Proposition~2.11]{buzzi2022measures}. For any $\mu \in \bbP_e^s(f)$, we denote its measured homoclinic class by $\mathcal{M}(\mu)$.
	
\section{Main Proposition}\label{sec:main}
Throughout this section, let $f$ be a $\cC^2$ diffeomorphism of a closed surface $M$. The goal of this section is to prove the main proposition of this paper, namely Proposition \ref{prop:MainProp}.

\subsection{Crovisier-Pujals Stable Manifold Theorem}
We first state the Crovisier-Pujals stable manifold theorem, which was proved in \cite{crovisier2018strongly}.

\begin{thm}[Crovisier-Pujals Stable Manifold]
	Let $f$ be a $\cC^2$ diffeomorphism of $M$. Let $\sigma, \tilde{\sigma}, \rho, \tilde{\rho} \in (0,1)$ be such that $\frac{\tilde{\sigma}\tilde{\rho}}{\sigma\rho}>\sigma$. Suppose there exists a point $x \in M$ and a direction $E\subset T_xM$ such that:
	\begin{equation*}
		\forall n\geq0, \quad \tilde{\sigma}^n\leq \norm{d_xf^n_{|E}}\leq \sigma^n \quad \text{and} \quad \tilde{\rho}^n \leq \frac{\norm{d_xf^n_{|E}}^2}{|\det d_xf^n|} \leq \rho^n.
	\end{equation*}
	Then $x$ has a one-dimensional stable manifold that varies continuously with $x$ in the $\cC^1$ topology. 
	\label{thm:CP-StableManifold}
\end{thm}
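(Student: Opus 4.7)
The plan is to construct $\Ws_{loc}(x)$ by a graph-transform argument along the forward orbit of $x$, and then to obtain $\cC^1$-continuity via uniform geometric estimates depending only on the four rate parameters $\sigma, \tilde{\sigma}, \rho, \tilde{\rho}$.

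First I would interpret the hypotheses geometrically. The inequality $\tilde{\sigma}^n \le \norm{d_xf^n_{|E}} \le \sigma^n$ asserts uniform exponential contraction of $E$ along the forward orbit, with wedged rates. In dimension two, if $F \subset T_xM$ is any line transverse to $E$, then $|\det d_xf^n| = \norm{d_xf^n_{|E}} \cdot \norm{d_xf^n_{|F}} \cdot |\sin\theta_n|$, where $\theta_n$ is the angle between $d_xf^n(E)$ and $d_xf^n(F)$. Thus the hypothesis on $\norm{d_xf^n_{|E}}^2/|\det d_xf^n|$ translates, up to angle factors, into two-sided bounds on $\norm{d_xf^n_{|E}}/\norm{d_xf^n_{|F}}$, that is, a quantitative domination of $E$ by the complementary direction.

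Next, in charts of controlled size around each $f^n(x)$ adapted to the pointwise splitting $E_n \oplus F_n$, I would write $f$ as a perturbation of its linear part, with nonlinear terms of size $O(r^2)$ at scale $r$ thanks to the $\cC^2$ regularity. The assumption $\tilde{\sigma}\tilde{\rho}/(\sigma\rho) > \sigma$ is precisely what forces the graph transform, acting on the space of Lipschitz graphs of slope at most one over $E_n$, to be a contraction at each step: the domination ratio beats the quadratic error introduced by the $\cC^2$ nonlinearity on scales comparable to $\norm{d_xf^n_{|E}}$. The unique invariant graph yields a one-dimensional embedded disc $\Ws_{loc}(x)$ tangent to $E$ at $x$, on which forward iterates contract exponentially at the rates prescribed by the hypothesis.

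The main obstacle, and the genuine content of the Crovisier-Pujals theorem, is the $\cC^1$-continuity of $x \mapsto \Ws_{loc}(x)$. The set of base points satisfying the hypothesis is not assumed closed and the direction $E$ is attached pointwise, so one must prove that a nearby hypothesis point with a nearby direction produces a $\cC^1$-close stable disc. I would achieve this by tracking all constants (chart sizes, Lipschitz constant and slope of the invariant graph, modulus of continuity of its tangent field) purely in terms of $\sigma, \tilde{\sigma}, \rho, \tilde{\rho}$; a shadowing argument then yields uniform convergence of the iterated graph transforms along converging base points, and uniqueness identifies the limit with $\Ws_{loc}(x)$. The $\cC^1$ (as opposed to merely $\cC^0$) part of the convergence requires a second-derivative estimate on the invariant graphs, and this is where the $\cC^2$ regularity of $f$ is genuinely needed: in the $\cC^{1+\alpha}$ range it would only give Hölder continuity of the tangent line, and the hypothesis $\tilde{\sigma}\tilde{\rho}/(\sigma\rho) > \sigma$ is calibrated so that the $\cC^2$ case still closes.
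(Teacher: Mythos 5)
The paper does not prove this theorem: it is stated and cited directly from Crovisier--Pujals \cite{crovisier2018strongly}, so there is no in-paper proof to compare your attempt against. What can be said is that your graph-transform outline is morally the right route, and several of your observations are accurate: the ratio $\norm{d_xf^n_{|E}}^2/|\det d_xf^n|$ is indeed a domination-type quantity, the $\cC^2$ regularity is what produces quadratic (and hence summable) error terms, and working at a scale calibrated to $\norm{d_xf^n_{|E}}$ rather than a fixed scale is essential because the hyperbolicity is non-uniform. Your worry that the set of hypothesis points ``is not assumed closed'' is a non-issue, though: conditions (CP1)--(CP5) are closed conditions on $(x,E)$ in $M\times\bbP(TM)$, so the set of base points is compact and the direction $E$ is uniquely determined by the two-sided bounds, which is what makes the $\cC^1$-continuity a well-posed statement.

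The genuine gap is at the single point you declare to be the crux. You assert that $\tilde{\sigma}\tilde{\rho}/(\sigma\rho)>\sigma$ ``is precisely what forces the graph transform\dots to be a contraction,'' but you do not display the error bookkeeping that produces this threshold. The content of the Crovisier--Pujals argument is exactly the accounting you skip: the ratios $\sigma/\tilde{\sigma}$ and $\rho/\tilde{\rho}$ measure how far the orbit can wander from a fixed exponential rate, the scale $r_n$ must be tuned (roughly to $\norm{d_xf^n_{|E}}$) so the quadratic error per step is summable after composition, and the sum of accumulated errors against the linear domination rate is where the extra factor of $\sigma$ on the right-hand side of $\tilde{\sigma}\tilde{\rho}/(\sigma\rho)>\sigma$ enters. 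Without writing the recursion for the slope of the iterated graph and showing it closes under that specific inequality, the argument does not actually establish the theorem; it only reasserts the hypothesis as if it were self-evidently the needed budget. The $\cC^1$-continuity step has the same defect: ``a shadowing argument then yields uniform convergence'' is a placeholder, and a correct treatment needs a uniform modulus of continuity for the tangent field of the invariant graph, extracted from a second-order (distortion) estimate rather than from shadowing per se.
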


\begin{defn}[CP-hyperbolic sets]
	Let $\sigma, \tilde{\sigma}_1, \tilde{\sigma}_2, \rho, \tilde{\rho}_1, \tilde{\rho}_2, \eta \in (0,1)$ be such that $\frac{\tilde{\sigma}_i\tilde{\rho}_i}{\sigma\rho}>\sigma$. We define the CP-hyperbolic set, denoted by $\CP$, as the set of all $x \in M$ for which there exist two distinct directions $E,F\subset T_xM$ satisfying, for all $n\geq0$:
	\begin{enumerate}[itemsep=0.2cm]
		\item [(CP1)] $\tilde{\sigma}_1^n\leq \norm{d_xf^n_{|E}}\leq \sigma^n$,
		\item [(CP2)] $\tilde{\rho}_1^n \leq \frac{\norm{d_xf^n_{|E}}^2}{|\det d_xf^n|} \leq \rho^n$,
		\item [(CP3)] $\tilde{\sigma}_2^n\leq \norm{d_xf^{-n}_{|F}}\leq \sigma^n$,
		\item [(CP4)] $\tilde{\rho}_2^n \leq \frac{\norm{d_xf^{-n}_{|F}}^2}{|\det d_xf^{-n}|} \leq \rho^n$,
		\item [(CP5)] $|\cos \measuredangle (E,F)|\leq \eta$.
	\end{enumerate}
	\label{def:CP-NUH-Set}
\end{defn}

Condition $(CP5)$ plays a crucial role. Without it, we could construct a sequence of points $x_n$ in some set $\CP$, with two directions $E_n$ and $F_n$, such that their angle tends to zero. The limit point $x$ (after taking a subsequence) would then have a direction that neither expands nor contracts, contradicting the definition of $\CP$. Condition $(CP5)$ ensures that this situation does not occur and that any CP-hyperbolic set is compact. This is similar to the assumptions on Pesin blocks (see \cite{katok1995introduction}).

\subsection{Main Proposition}
We are now ready to state our main proposition.

\begin{prop}
	There exist an integer $N$ and constants $\sigma, \tilde{\sigma}_1, \tilde{\sigma}_2, \rho, \tilde{\rho}_1, \tilde{\rho}_2, \eta \in (0,1)$ such that $\frac{\tilde{\sigma}_i\tilde{\rho}_i}{\sigma\rho}>\sigma$ and for any ergodic measure $\mu \in \bbP(f)$ satisfying:
	\begin{equation}
		\min\big(-\lambda^s(\mu),\lambda^u(\mu)\big) > \frac{19}{20} R(f),
		\label{eq:mainProp}
	\end{equation}
	we have $\mu(\CP) > 0$, where $\CP$ is a CP-hyperbolic set for $f^N$.
	\label{prop:MainProp}
\end{prop}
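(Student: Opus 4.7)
The plan is, for each $\mu$ as in the statement, to find a positive-$\mu$-measure subset of $\CP$ (for $f^N$), with $E,F$ taken to be the Oseledets stable and unstable lines $\Es, \Eu$ of $\mu$. The four lower-bound halves of (CP1)--(CP4) will be arranged to hold automatically at every point of $M$ through crude a priori estimates on the derivative of $f^{\pm N}$, while the four upper-bound halves will be established on a set of large $\mu$-measure by Pliss's lemma; the angle condition (CP5) will come from a Pesin block of large measure.

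Concretely, I would first fix $\epsilon>0$ small and $N$ large enough that $\|df^{\pm N}\|\leq e^{N(R(f)+\epsilon)}$, and set $\tilde{\sigma}_1=\tilde{\sigma}_2:=e^{-N(R(f)+\epsilon)}$, $\tilde{\rho}_1=\tilde{\rho}_2:=e^{-2N(R(f)+\epsilon)}$. For any $y\in M$ and any one-dimensional $E\subset T_yM$, singular value estimates give $\|d_yf^{Nn}|_E\|\geq 1/\|df^{-Nn}\|\geq \tilde{\sigma}_i^n$ and $\|d_yf^{Nn}|_E\|^2/|\det d_yf^{Nn}|\geq 1/(\|df^{Nn}\|\,\|df^{-Nn}\|)\geq\tilde{\rho}_i^n$, with symmetric bounds for $f^{-Nn}$; so the four lower-bound halves hold automatically, regardless of point or direction. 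I would then choose $\sigma,\rho\in(0,1)$ satisfying (a) the CP stability constraint $\tilde{\sigma}_i\tilde{\rho}_i>\sigma^2\rho$, equivalently $-2\log\sigma-\log\rho>3N(R(f)+\epsilon)$, and (b) the four Pliss densities below each strictly exceed $3/4$.

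For $\mu$-a.e.\ $x$, apply Pliss's lemma to the Birkhoff cocycles $\log\|df^N|_{\Es}\|$, $\log(\|df^N|_{\Es}\|^2/|\det df^N|)$, $\log\|df^{-N}|_{\Eu}\|$, $\log(\|df^{-N}|_{\Eu}\|^2/|\det df^{-N}|)$ along the $f^N$-orbit of $x$. Their Oseledets Birkhoff averages are $N\lambda^s,\ N(\lambda^s-\lambda^u),\ -N\lambda^u,\ N(\lambda^s-\lambda^u)$, and each is bounded above pointwise by $N(R(f)+\epsilon)$ or by $2N(R(f)+\epsilon)$ (the latter using that $\|df^N|_E\|^2/|\det df^N|$ is at most the condition number $\|df^N\|\,\|df^{-N}\|$). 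Pliss produces sets $A_1,\dots,A_4\subset M$ with $\mu(A_i)>3/4+\epsilon''$ for some $\epsilon''>0$ coming from the strict slack $\chi-\tfrac{19}{20}R(f)>0$, on which the corresponding upper bound in (CPi) holds for all $n\geq 0$. Intersecting with a Pesin block $K$ of $\mu$-measure at least $1-\epsilon''$, on which the Oseledets splitting is continuous and $|\cos\angle(\Es,\Eu)|\leq\eta$ for a fixed $\eta<1$ (this is (CP5)), the density intersection inequality yields $\mu(A_1\cap\cdots\cap A_4\cap K)\geq 4(3/4+\epsilon'')+(1-\epsilon'')-4=3\epsilon''>0$, and every point of this intersection lies in $\CP$.

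The hard part will be the balancing in the choice of $\sigma,\rho$: the CP stability constraint forces $s:=-\log\sigma$ and $r:=-\log\rho$ to be large (so that $2s+r>3N(R(f)+\epsilon)$), whereas each Pliss density of the form $(N\chi-s)/(N(R(f)+\epsilon)-s)$, with $\chi:=\min(-\lambda^s,\lambda^u)$, requires $s$ to be small enough that the chosen rate sits well above the Lyapunov exponent. A direct algebraic check shows the admissible parameter window is non-empty precisely under the hypothesis $\chi>\tfrac{19}{20}R(f)$, up to the $O(\epsilon)$ correction from $R(f^N)^{1/N}\approx R(f)$; this is exactly how the constant $\tfrac{19}{20}$ enters the statement.
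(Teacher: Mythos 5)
Your overall scheme is the same as the paper's: apply Pliss's lemma to the four Birkhoff cocycles coming from (CP1)--(CP4) to get Pliss times of definite density, make the lower-bound halves automatic by choosing $\tilde\sigma_i,\tilde\rho_i$ at the worst-case rate $e^{-N(R(f)+\epsilon)}$, resp.\ $e^{-2N(R(f)+\epsilon)}$, and finish by a $\sum\mu(B_i) - (k-1)$ intersection count. That part is sound. The genuine gap is in how you secure (CP5).

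You take the angle bound from a Pesin block $K$ with $\mu(K)\geq 1-\epsilon''$ and assert that on $K$ one has $|\cos\measuredangle(\Es,\Eu)|\leq\eta$ for \emph{a fixed} $\eta<1$. But Pesin blocks and the associated angle bounds are $\mu$-dependent, and nothing in the hypothesis $\min(-\lambda^s,\lambda^u)>\frac{19}{20}R(f)$ forces a uniform regularity: a sequence of measures $\mu_k$ with uniformly large exponents can perfectly well have its Oseledets splitting degenerate (angle tending to $0$) on any set of fixed positive measure, unless one has additional quantitative control. Since the CP-hyperbolic set $\CP$ must be a single set (with a single $\eta$) independent of $\mu$ --- that is the whole point of the proposition, and what drives the compactness argument in the finiteness theorem --- a $\mu$-dependent $\eta$ does not suffice. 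The paper instead \emph{derives} the uniform angle bound from the Pliss conditions themselves: if $x\in\Delta_1\cap f^{-1}(\Delta_3)$ (so $\|d_xf^N|_{\Es}\|\leq\sigma$ and $\|d_xf^N|_{\Eu}\|\geq\sigma^{-1}$), then $|\cos\measuredangle(\Es_x,\Eu_x)|\leq 1-(\sigma^{-1}-\sigma)^2/(2\beta^2)$, a bound depending only on $\sigma$ and $\|df^N\|$, not on $\mu$. This is the missing ingredient in your argument.

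A secondary but related issue: once you replace your Pesin block by the set $f^{-N}(\Delta_3)$ (as the paper's angle argument requires), the intersection involves five Pliss-type sets rather than four plus a near-full set, so each must have measure $>4/5$, not $>3/4$. With density $3/4$ your parameter-window computation would actually yield the threshold $\frac{15}{16}R(f)$, not $\frac{19}{20}R(f)$; your stated final constant $\frac{19}{20}$ is the one you would obtain \emph{after} switching to density $4/5$. As written, the two parts of your argument are numerically inconsistent, and the source of the inconsistency is precisely the unjustified Pesin-block step.

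Minor remarks: your claim that the four lower bounds hold automatically at every point and every direction is correct and coincides with the paper's implicit choice $\tilde\sigma_1=\alpha$, $\tilde\rho_1=\alpha/\beta$, etc.; and your observation that the CP stability constraint becomes $-2\log\sigma-\log\rho>3N(R(f)+\epsilon)$ is correct. With the (CP5) fix and the $4/5$ density, your ``direct algebraic check'' reduces to the paper's computation and gives the nonempty parameter window exactly for $\chi>\frac{19}{20}R(f)$.
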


It is worth emphasizing that measures with large Lyapunov exponents assign positive measure to the same set, which can be seen as a form of uniformity. This can be compared to the SPR property (see \cite{buzzi2025strong}). The remainder of this section is devoted to the proof of Proposition \ref{prop:MainProp}.
\subsection{Proof of Proposition \ref{prop:MainProp}}
We now state Pliss' lemma, a powerful combinatorial tool for obtaining lower bounds on the measure of certain sets with hyperbolic properties. Since it is well known in the literature, we do not provide a proof, but it can be found in \cite{crovisier2018strongly}.

\begin{lemma}[Pliss' Lemma]
	For any $\alpha_1 < \alpha_2 < \alpha_3$ and any sequence $(a_n)_{n \in \bbN} \in (\alpha_1, +\infty)$ satisfying:
	\begin{equation*}
		\limsup_{n \rightarrow \infty} \frac{1}{n} \sum_{i=0}^{n-1} a_i \leq \alpha_2,
	\end{equation*}
	there exists a sequence of integers $0 \leq n_1 < n_2 < n_3 < \dots$ such that:
	\begin{equation*}
		\begin{aligned}
			&- \ \text{For any} \ k \geq 1 \ \text{and any} \ n > n_k, \ \text{we have} \ \frac{1}{n-n_k}\sum_{i=n_k}^{n-1} a_i \leq \alpha_3.\\
			&- \ \limsup_{k \rightarrow \infty} \frac{k}{n_k} \geq \frac{\alpha_3 - \alpha_2}{\alpha_3 - \alpha_1}.
		\end{aligned}
	\end{equation*}
	\label{lem:PlissLemma}
\end{lemma}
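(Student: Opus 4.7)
The plan is to translate the statement into a forward-minimum problem on partial sums. Set $b_i = \alpha_3 - a_i$, so $b_i < \alpha_3 - \alpha_1$ (the sequence is bounded above, since $a_i > \alpha_1$), and the hypothesis $\limsup \tfrac{1}{n}\sum_{i=0}^{n-1} a_i \leq \alpha_2$ gives $\liminf_{n\to\infty} \tfrac{1}{n}\sum_{i=0}^{n-1} b_i \geq \alpha_3 - \alpha_2 > 0$. Let $S_n = \sum_{i=0}^{n-1} b_i$, so $S_n \to +\infty$. The desired inequality $\tfrac{1}{n-n_k}\sum_{i=n_k}^{n-1} a_i \leq \alpha_3$ for every $n > n_k$ is exactly $S_n \geq S_{n_k}$ for every $n > n_k$. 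Thus the candidates for $n_k$ are precisely the elements of
\begin{equation*}
G_\infty := \{ n \geq 0 : S_n \leq S_m \text{ for all } m \geq n\},
\end{equation*}
the ``forward minima'' of $(S_n)$.

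The set $G_\infty$ is infinite and unbounded: for any $n_0 \geq 0$, the infimum $\inf_{m \geq n_0} S_m$ is finite and attained (since $S_n \to \infty$), and its smallest attaining index lies in $G_\infty \cap [n_0, \infty)$. Enumerate $G_\infty = \{n_1 < n_2 < \ldots\}$. For the density bound, fix $N \geq 1$ and consider the finite-window analogue $G_N = \{ n \in [0, N) : S_m \geq S_n \text{ for all } m \in [n, N]\} = \{m_1 < \ldots < m_K\}$. The key inequality is
\begin{equation*}
S_N \leq K(\alpha_3 - \alpha_1).
\end{equation*}
To prove it, split $S_N = \sum_{i \in G_N} b_i + \sum_{i \in [0,N) \setminus G_N} b_i$. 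The first sum is at most $K(\alpha_3 - \alpha_1)$ by the upper bound on $b_i$. The second is nonpositive: I partition the bad indices into the blocks $[0,m_1)$, $(m_j, m_{j+1})$ for $1 \leq j < K$, and $(m_K, N)$, and use the descent property that every bad $i$ admits some $i' \in (i, N]$ with $S_{i'} < S_i$. Iterating this property shows that $S_{m_1} = \min_{0 \leq m \leq N} S_m \leq S_0 = 0$ (first block), $S_{m_{j+1}} < S_{m_j + 1}$ (middle blocks, when non-empty), and $S_N < S_{m_K + 1}$ (last block); each block sum then has the form $S_\bullet - S_\bullet \leq 0$.

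I pass from the finite to the infinite setting by choosing $N$ in $G_\infty$. If $N \in G_\infty$, then for any $n \in G_N$ and any $m > N$ we have $S_m \geq S_N \geq S_n$, so $n \in G_\infty$; hence $G_N \subseteq G_\infty \cap [0, N]$. Applying this with $N = n_k$ yields
\begin{equation*}
k = |G_\infty \cap [0, n_k]| \geq |G_{n_k}| \geq \frac{S_{n_k}}{\alpha_3 - \alpha_1}.
\end{equation*}
Since $\liminf_{k\to\infty} S_{n_k}/n_k \geq \alpha_3 - \alpha_2$, we conclude $\liminf_{k\to\infty} k/n_k \geq (\alpha_3 - \alpha_2)/(\alpha_3 - \alpha_1)$, which is even stronger than the required $\limsup$ bound.

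The main obstacle is the nonpositivity of the bad-block sums, which demands a careful iterated use of the descent property at each block boundary to produce the correct strict inequalities between partial sums and to identify $S_{m_1}$ as the global minimum on $[0, N]$. Once this finite counting step is in place, the infinite version is a clean consequence because elements of $G_\infty$ themselves serve as valid windows $N$.
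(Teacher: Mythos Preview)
The paper does not actually prove Pliss' lemma: it explicitly states that the result is well known and refers to \cite{crovisier2018strongly} for a proof. So there is no ``paper's own proof'' to compare against.

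Your argument is correct and is essentially the standard proof. The reduction via $b_i=\alpha_3-a_i$ and $S_n=\sum_{i<n}b_i$ turns the problem into locating the forward minima of the partial sums, and your key counting inequality $S_N\le |G_N|\,(\alpha_3-\alpha_1)$ is exactly the heart of Pliss' lemma. Two minor remarks: first, the block analysis only needs the non-strict inequalities $S_{m_{j+1}}\le S_{m_j+1}$ and $S_N\le S_{m_K+1}$, and these follow cleanly by noting that the smallest minimizer of $S$ on $[m_j+1,N]$ must lie in $G_N$ (or equal $N$), so your ``iterated descent'' phrasing, while fine, is a bit heavier than necessary. Second, when $N=n_k\in G_\infty$ you in fact have $G_{n_k}=G_\infty\cap[0,n_k)$, so $|G_{n_k}|=k-1$ rather than $k$; this off-by-one is harmless in the limit and your conclusion $\liminf_{k\to\infty} k/n_k\ge(\alpha_3-\alpha_2)/(\alpha_3-\alpha_1)$ stands (and indeed is stronger than the $\limsup$ asserted in the lemma).
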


The idea is to use Pliss' lemma in combination with Birkhoff's theorem to derive a lower bound on the measure of the set $\CP$. By obtaining good bounds on the derivatives and choosing the constants appropriately, we ensure that this set has strictly positive measure.\\

For any $N\in \bbN$, define $\beta_N := \sup_y \norm{d_yf^N}$ and $\alpha_N := \sup_y \norm{d_yf^{-N}}^{-1}$. Let $\mu$ be a measure satisfying the hypotheses of Proposition \ref{prop:MainProp}. Denoting by $\lambda^{u/s}(\mu,f^N)$ the stable and unstable Lyapunov exponents of $f^N$. We observe that:
\begin{equation*}
	\min(\lambda^u(\mu,f^N),-\lambda^s(\mu,f^N)) = N\min(\lambda^u(\mu),-\lambda^s(\mu)) > \frac{19}{20}NR(f).
\end{equation*}
On the other hand, we have:
\begin{equation*}
	\frac{NR(f)}{\max\big(\log\norm{df^N},\log\norm{df^{-N}}\big)} \rightarrow 1.
\end{equation*}
Thus, by considering a sufficiently large iterate $f^N$, condition \eqref{equivrelation} yields:
\begin{equation*}
	\min\big(-\lambda^s(\mu,f^N),\lambda^u(\mu,f^N)\big) >  \frac{19}{20}\max(-\log \alpha_N,\log \beta_N).
\end{equation*}
For simplicity, we continue to write $f, \alpha, \beta$ instead of $f^N, \alpha_N, \beta_N$. Let $\frac{19}{20} < t < 1$ such that:
\begin{equation*}
	\min\big(-\lambda^s(\mu),\lambda^u(\mu)\big) \geq t \max(-\log \alpha,\log \beta).
\end{equation*}
Define $\delta_t = t\max(-\log\alpha,\log\beta)$. Consider some constants $\sigma, \tilde{\sigma}_1, \tilde{\sigma}_2, \rho, \tilde{\rho}_1, \tilde{\rho}_2, \eta \in (0,1)$ such that $\frac{\tilde{\sigma}_i\tilde{\rho}_i}{\sigma\rho}>\sigma$. We introduce $\Delta_i$, the set of points satisfying condition $(CP_i)$ for $i \in \{1,\dots,5\}$ in Definition \ref{def:CP-NUH-Set} for these fixed constants. Note that $\Delta_i$ depends on the chosen constants, but for clarity, we omit this dependence in the notation. Recall that $\CP = \cap_i \Delta_i$. 

Since $t > 19/20$, we obtain:
\begin{equation*}
	\frac{t-4/5}{t/5} > \frac{3}{4t}.
\end{equation*}
Choose $s \in \big(\frac{3}{4t}, \frac{t-4/5}{t/5}\big)$. Define $\sigma_{t,s} = e^{-s\delta_t}$ and $\rho_{t,s} = e^{-2s\delta_t}$.
We first apply Pliss lemma to show that, for an appropriate choice of constants, $\mu(\Delta_i) > \frac{4}{5}$ for $i = 1,3$. We begin with $\Delta_1$. Recall that the ergodicity of $\mu$ and Birkhoff theorem gives us:
\begin{equation*}
	\mu(\Delta_1) = \frac{1}{n} \ \lim_{n \rightarrow \infty} \sum_{k = 0}^{n-1} \mathds{1}_{\Delta_1}(f^k(x)) \quad \text{for} \ \mu \text{-a.e.} \ x \in M.
\end{equation*}
Take such an $x$. Recall that we may assume its tangent space splits into two $df$-invariant directions: $T_xM = \Es_x \oplus \Eu_x$. Since $\Es_{x}$ is one-dimensional, we have for all $m \geq 0$:
\begin{equation*}
	\begin{aligned}
		f^m(x) \in \Delta_1 
		&\iff  \forall \ n \geq 0, \quad (\tilde{\sigma}_1)^n\leq\norm{d_{f^m(x)}f^n_{|\Es_{f^m(x)}}} \leq \sigma^n\\
		&\iff \forall \ n \geq 0, \quad \log \tilde{\sigma}_1\leq\frac{1}{n} \sum_{i = m}^{n+m-1} \log \norm{d_{f^i(x)}f_{|\Es_{f^i(x)}}} \leq \log \sigma.
	\end{aligned}
\end{equation*}
Define, for all $n \geq 0$, $a_n = \log \norm{d_{f^n(x)}f_{|\Es_{f^n(x)}}}$. Note that $a_n \geq \log \alpha$ for all $n$. By Oseledets theorem, we also have:
\begin{equation*}
	\lim_{n \rightarrow \infty} \frac{1}{n} \sum_{k = 0}^{n-1} a_k = \lim_{n \rightarrow \infty} \frac{1}{n} \log \norm{Df^n_{|\Es_x}} = \lambda^s(\mu) \leq - \delta_t.
\end{equation*}

Since $\log \sigma_{t,s} > -\delta_t$, we can apply Pliss lemma \ref{lem:PlissLemma} with constants $\alpha_1 = \log \alpha, \ \alpha_2=-\delta_t$, and $\alpha_3 = \log \sigma_{t,s}$. We obtain a sequence $0 \leq n_1 < n_2 < \dots < n_m < \dots$ such that $f^{n_k}(x) \in \Delta_1$ for every $k \in \bbN$, provided we have chosen the constants $\sigma = \sigma_{s,t}$, $\tilde{\sigma}_1 = \alpha$, and $E = \Es_x$. We conclude that:
\begin{equation*}
	\mu(\Delta_1) = \frac{1}{n} \ 
	\lim_{n \rightarrow \infty} \sum_{k = 0}^{n-1} \mathds{1}_{\Delta_1}(f^k(x)) = \lim_{m \rightarrow \infty} \frac{m}{n_m} \geq \frac{\delta_t + \log \sigma_{t,s}}{-\log \alpha + \log \sigma_{t,s}} \geq \frac{t-st}{1-st}.
\end{equation*}
Note that the condition $s<\frac{t-4/5}{t/5}$ implies:
\begin{equation*}
	\frac{t-st}{1-st} >\frac{4}{5}.
\end{equation*}
With this choice of constants, we obtain $\mu(\Delta_1) > \frac{4}{5}$. Choosing $\sigma = \sigma_{s,t}$, $\tilde{\sigma}_2 = \beta^{-1}$, and the direction $F = \Eu_x$, and applying the same argument for $f^{-1}$, we also obtain $\mu(\Delta_3) > \frac{4}{5}$.\\

We now apply the same strategy to show that $\mu(\Delta_i) > \frac{4}{5}$ for $i=2,4$. We begin with $\Delta_2$. Let $x \in M$ satisfy:
\begin{equation*}
	\lim_{n\rightarrow +\infty}\frac{1}{n}\sum_{i=0}^{n-1} \mathds{1}_{\Delta_2}(f^i(x)) = \mu(\Delta_2).
\end{equation*}
Observe that for $m \geq 0$:
\begin{equation*}
	\begin{aligned}
		f^m(x) \in \Delta_2
		&\iff \forall \ n \geq 0, \quad (\tilde{\rho}_1)^n\leq \frac{\norm{d_{f^m(x)}f^n_{|\Es_{f^m(x)}}}^2}{|\det(d_{f^m(x)}f^n)|} \leq \rho^n \\
		&\iff \forall \ n\geq0, \quad \log \tilde{\rho}_1 \leq\frac{1}{n} \sum_{i=m}^{n+m-1} \log \frac{\norm{d_{f^i(x)}f_{|\Es_{f^i(x)}}}^2}{|\det d_{f^i(x)}f|} \leq \log \rho.
	\end{aligned}
\end{equation*}
Defining $a_n = \log \frac{\norm{d_{f^n(x)}f_{|\Es_{f^n(x)}}}^2}{|\det d_{f^n(x)}f|}$, we note that:
\begin{equation*}
	|\det d_{f^n(x)}f| \leq \norm{d_{f^n(x)}f_{|\Es_{f^n(x)}}} \norm{d_{f^n(x)}f_{|(\Es_{f^n(x)})^{\perp}}}
\end{equation*}
implies that $a_n \geq \log\alpha-\log\beta$ for all $n$.

Furthermore, by Oseledets theorem, we obtain:
\begin{equation*}
	\begin{aligned}
		\lim_{n \rightarrow \infty} \frac{1}{n} \sum_{i = 0}^{n-1} a_i &= \lim_{n \rightarrow \infty} \frac{1}{n} \ \log \frac{\norm{d_xf^n_{|\Es_x}}^2}{|\det d_xf^n|}\\
		&= \lim_{n \rightarrow \infty} \frac{1}{n} \ \big( 2\log\norm{d_xf^n_{|\Es_x}} - \log|\det d_xf^n|\big)\\
		&=2\lambda^s(\mu) - (\lambda^u(\mu) + \lambda^s(\mu))\\
		&=\lambda^s(\mu) - \lambda^u(\mu) \leq -2\delta_t.
	\end{aligned}
\end{equation*}
Observe that $\log \rho_{t,s} > -2\delta_t$. Applying Pliss' lemma \ref{lem:PlissLemma} with constants $\alpha_1 = \log\alpha-\log\beta$, $\alpha_2 = -2\delta_t$, and $\alpha_3 = \log \rho_{t,s}$, and choosing $\rho=\rho_{t,s}$ and $\tilde{\rho}_1 = \alpha\beta^{-1}$, we obtain:
\begin{equation*}
	\mu(\Delta_2) \geq \frac{2\delta_t + \log \rho_{t,s}}{\log \beta - \log \alpha + \log \rho_{t,s}}\geq\frac{t-st}{1-st} > \frac{4}{5}.
\end{equation*}
By symmetry, the same argument applies to $\Delta_4$. Choosing $\tilde{\rho}_2=\alpha\beta^{-1}$, we also obtain $\mu(\Delta_4) > \frac{4}{5}$.\\

We now verify that Condition $(CP5)$ holds on a set of large measure. We claim that if $x \in \Delta_1 \cap f^{-1}(\Delta_3)$ for some constants $\sigma, \tilde{\sigma}_1, \tilde{\sigma}_2$, then there exists a constant $0< c < 1$, depending only on $\sigma$ and $f$, such that:
\begin{equation*}
	|\cos\big(\measuredangle(\Es_x,\Eu_x)\big)| < c.
\end{equation*}
Let us now prove this claim. First, note that if $x \in \Delta_1 \cap f^{-1}(\Delta_3)$, then:
\begin{equation*}
	\norm{d_xf_{|\Es_x}} \leq \sigma \quad \text{and} \quad \norm{d_xf_{|\Eu_x}} \geq \sigma^{-1}.
\end{equation*}

Take two unit vectors $e_s \in \Es_x$ and $e_u \in \Eu_x$ and define $u = e_s - e_u$. Observe that:
\begin{equation*}
	\norm{d_xfu} = \norm{d_xfe_s - d_xfe_u} \geq \norm{d_xfe_u} - \norm{d_xfe_s} \geq \sigma^{-1} - \sigma.
\end{equation*}
This implies:
\begin{equation*}
	\norm{u} = \norm{d_{f(x)}f^{-1}d_xfu} \geq \frac{\norm{d_xfu}}{\beta} \geq \frac{\sigma^{-1} - \sigma}{\beta}.
\end{equation*}
Using the classical identity for the cosine of the angle between two vectors and noting that $\norm{u} \leq \sqrt{2}$, we obtain:
\begin{equation*}
	\begin{aligned}
		|\cos\big(\measuredangle(\Es_x,\Eu_x)\big)| &= | \langle e_s,e_u \rangle | = 1 - \frac{\norm{u}^2}{2} \\
		&\leq 1 - \frac{(\sigma^{-1}- \sigma)^2}{2\beta^2} < 1,
	\end{aligned}
\end{equation*}
which proves the claim. With the choice of constant $\sigma = \sigma_{t,s}$, we conclude that $\Delta_1\cap f^{-1}(\Delta_3) \subset \Delta_5$ with $\eta = 1 - \frac{(\sigma^{-1}- \sigma)^2}{2\beta^2}$.\\
To summarize, we have proven that if we choose:
\begin{equation*}
	\begin{aligned}
		&\sigma = e^{-s\delta_t}, \quad &\tilde{\sigma}_1 = \alpha, \quad &\tilde{\sigma}_2 = \beta^{-1}, \\
		&\rho = e^{-2s\delta_t}, \quad &\tilde{\rho}_1 = \tilde{\rho}_2 = \frac{\alpha}{\beta}, \quad &\eta = 1 - \frac{(\sigma^{-1}- \sigma)^2}{2\beta^2}, \\
	\end{aligned}
\end{equation*}
where $s \in \big(\frac{3}{4t}, \frac{t-4/5}{t/5}\big)$, $t>19/20$, and $\delta_t = t\max(-\log \alpha, \log \beta)$, then we obtain:
\begin{equation*}
	\mu(\CP) = \mu(\cap_i \Delta_i)\geq \mu\big(\Delta_1\cap\Delta_2\cap\Delta_3\cap\Delta_4\cap f^{-1}(\Delta_3)\big) > 0.
\end{equation*}

We now verify that $\frac{\tilde{\sigma}_i\tilde{\rho}_i}{\sigma\rho} > \sigma$ for $i=1,2$. It is straightforward to check that, for these choices of constants, this condition holds whenever $s>\frac{3}{4t}$. This concludes the proof of Proposition \ref{prop:MainProp}.

\section{Consequences of Proposition \ref{prop:MainProp}}\label{sec:mme}

\subsection{Finiteness of Homoclinic Classes} 
The first consequence of Proposition \ref{prop:MainProp} is Theorem \ref{thm:MainThm}, which we restate here in terms of homoclinic classes.

\begin{thm}
	Let $f$ be a $\cC^2$ diffeomorphism of $M$. The number of measured homoclinic classes $\mathcal{M}(\nu)$ containing a measure $\mu \in \mathcal{M}(\nu)$ satisfying \eqref{eq:mainProp} is finite.
	\label{thm:FinitenessHomClas}
\end{thm}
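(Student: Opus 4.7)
The proof combines Proposition \ref{prop:MainProp} with a compactness and pigeonhole argument applied to the CP-hyperbolic set. The plan is as follows.

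First, let $N$ and $\CP \subset M$ be the integer and CP-hyperbolic set for $g := f^N$ provided by Proposition \ref{prop:MainProp}. For any $x \in M$, the global stable (resp.\ unstable) manifold of $g$ coincides with that of $f$, because a point's orbit under $f^N$ converges to $x$'s orbit iff the full $f$-orbit does (using a uniform Lipschitz bound on $f, f^{-1}, \ldots, f^{N-1}$). So the homoclinic relation for $f$ on ergodic saddle measures is identical to the one for $g$, and every $f$-ergodic $\mu$ satisfying \eqref{eq:mainProp} verifies $\mu(\CP) > 0$.

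Second, I extract uniform geometric data on $\CP$. For each $x \in \CP$, Theorem \ref{thm:CP-StableManifold} applied to $g$ with direction $E$ produces a $C^1$ local stable disc through $x$; applying the theorem to $g^{-1}$ with direction $F$ gives a local unstable disc. Both families vary continuously in the $C^1$ topology on $\CP$. The paragraph following Definition \ref{def:CP-NUH-Set} shows that $\CP$ is compact, and condition (CP5) forces the angle between $E_x$ and $F_x$ to stay above $\arccos(\eta)$. By compactness, there exist $r_0 > 0$ and $\theta_0 > 0$ such that every $x \in \CP$ carries local stable and unstable discs $W^s_{loc}(x)$, $W^u_{loc}(x)$ of size at least $r_0$, whose tangent directions form an angle at least $\theta_0$, and which depend continuously on $x$ in the $C^1$ topology.

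Third, I apply a standard quantitative transversality argument: since the two direction fields are continuous on $\CP$ and uniformly transverse, there is $\epsilon_0 > 0$ such that whenever $x_1, x_2 \in \CP$ satisfy $d(x_1, x_2) < \epsilon_0$, the disc $W^u_{loc}(x_1)$ meets $W^s_{loc}(x_2)$ transversely in $M$ (and symmetrically $W^u_{loc}(x_2)$ meets $W^s_{loc}(x_1)$ transversely). Cover the compact set $\CP$ by finitely many open balls $B_1, \ldots, B_K$ of diameter less than $\epsilon_0$.

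Finally, I apply pigeonhole. Any $f$-ergodic $\mu$ satisfying \eqref{eq:mainProp} has $\mu(\CP) > 0$, so $\mu(B_i \cap \CP) > 0$ for at least one $i \in \{1, \ldots, K\}$. If two such measures $\mu_1, \mu_2$ land on the same index $i$, set $A_1 = A_2 = B_i \cap \CP$: then $\mu_j(A_j) > 0$, and by the previous step every $(x_1, x_2) \in A_1 \times A_2$ satisfies both $\Wu(x_1) \intert \Ws(x_2) \neq \emptyset$ and $\Wu(x_2) \intert \Ws(x_1) \neq \emptyset$. Hence $\mu_1 \simh \mu_2$ and they belong to the same measured homoclinic class, so at most $K$ distinct classes can contain a measure satisfying \eqref{eq:mainProp}. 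The main obstacle is the quantitative transversality step: one must combine the $C^1$-continuity of the Crovisier-Pujals invariant manifolds with the compactness of $\CP$ to pass from pointwise stable/unstable discs to a uniform scale on which nearby discs genuinely cross. Everything else is a direct consequence of Proposition \ref{prop:MainProp} and the definition of the homoclinic relation.
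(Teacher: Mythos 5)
Your argument is correct and is essentially the same as the paper's: apply Proposition \ref{prop:MainProp} to put uniform positive mass on the compact CP-hyperbolic set for $f^N$, use the $\cC^1$-continuity of the Crovisier--Pujals stable/unstable manifolds together with compactness to get a uniform transversality scale $\delta$, cover by finitely many $\delta$-balls, and pigeonhole. The only stylistic difference—unpacking the transversality into separate uniform disc-size/angle bounds and then a quantitative crossing step, and spelling out why the homoclinic relation transfers between $f^N$ and $f$—does not change the substance.
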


\begin{proof}
	Proposition \ref{prop:MainProp} provides an integer $N$ and constants $\sigma, \tilde{\sigma}_1, \tilde{\sigma}_2, \rho, \tilde{\rho}_1, \tilde{\rho}_2, \eta$ such that, for any $\mu$ satisfying \eqref{eq:mainProp}, we have $\mu(\CP) >0$, where $\CP$ is a CP-hyperbolic set for $f^N$. To simplify the notations, we denote this set by $\Lambda$. 
	
	By Theorem \ref{thm:CP-StableManifold}, every $x \in \Lambda$ has one-dimensional stable and unstable local manifolds that are transverse at $x$ and vary continuously with $x$. Since $\Lambda$ is compact, there exists $\delta>0$ such that if $x,y \in \Lambda$ satisfy $d(x,y)<\delta$, then:
	\begin{equation*}
		\Ws_{loc}(x) \intert \Wu_{loc}(y) \neq \emptyset \quad \text{and} \quad \Ws_{loc}(y) \intert \Wu_{loc}(x) \neq \emptyset.
	\end{equation*}
	
	Again, by compactness, we can cover $\Lambda$ with a finite number of balls $B_1, \dots, B_L$ of radius $\delta$. Since $\mu(\Lambda)>0$, there exists at least one ball $B_i$ such that $\mu(B_i)>0$. Now, if $\nu_1$ and $\nu_2$ are two $f^N$-invariant measures with $\nu_j(B_i)>0$ for $j=1,2$ and some $i$, then $\nu_1\simh\nu_2$ by our choice of $\delta$. This proves that the number of $f^N$-measured homoclinic classes containing a measure $\mu$ satisfying \eqref{eq:mainProp} is finite. Finally, if $\mu$ and $\nu$ are $f$-invariant measures that are homoclinically related for $f^N$, they are also related for $f$, completing the proof.
\end{proof}

Corollary \ref{cor:FinMME} follows directly from Theorem \ref{thm:FinitenessHomClas}, since, by Ruelle's inequality, condition \eqref{hypmainthm} ensures that any measure of maximal entropy satisfies \eqref{eq:mainProp}. Furthermore, at most one measure of maximal entropy exists in each measured homoclinic class, as proven in \cite{buzzi2022measures}.\\

We emphasize that the same proof provides a uniform version of Theorem \ref{thm:FinitenessHomClas}, with a uniform bound on the number of measured homoclinic classes containing measures satisfying \eqref{eq:mainProp}.

\begin{thm}
	Let $(M,g)$ be a compact, connected, boundaryless Riemannian surface. Let $K_1,K_2>0$. There exists a constant $C$, depending only on $K_1$, $K_2$ and $M$, with the following property. For any $\cC^2$ diffeomorphism $g$ of $M$ such that $\norm{g}_{\cC^2}, \norm{g^{-1}}_{\cC^2} \in [K_1,K_2]$, the number of measured homoclinic classes containing a measure satisfying \eqref{eq:mainProp} is at most $C$.
	
	In particular, the number of m.m.e.'s of $g$ is bounded by $C$.
\end{thm}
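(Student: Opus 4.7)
The plan is to revisit the proofs of Proposition \ref{prop:MainProp} and Theorem \ref{thm:FinitenessHomClas} and verify that every constant appearing there can be chosen to depend only on $K_1$, $K_2$ and $M$, not on the individual diffeomorphism $g$. Set $\mathcal{F} := \{ g : \|g\|_{\cC^2}, \|g^{-1}\|_{\cC^2} \in [K_1, K_2]\}$. Under this assumption one has $R(g) \leq \log K_2$, $\alpha_N(g) \geq K_2^{-N}$ and $\beta_N(g) \leq K_2^N$ for every $N$, uniformly in $g \in \mathcal{F}$. Moreover, $\mathcal{F}$ is precompact in the $\cC^1$-topology, and the subadditive convergence $\tfrac{1}{N}\log\|dg^{\pm N}\| \to R^\pm(g)$ can be made uniform on $\mathcal{F}$ by Dini-type arguments; this lets us fix, once and for all, an integer $N = N(K_1, K_2)$ and a tuple $(\sigma, \tilde\sigma_1, \tilde\sigma_2, \rho, \tilde\rho_1, \tilde\rho_2, \eta)$ of constants valid in the proof of Proposition \ref{prop:MainProp} simultaneously for every $g \in \mathcal{F}$. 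Consequently, every ergodic measure $\mu$ of $g$ satisfying \eqref{eq:mainProp} charges a CP-hyperbolic set $\Lambda_g = \CP$ for $g^N$, defined through $g$-independent constants.

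By Theorem \ref{thm:CP-StableManifold}, each $x \in \Lambda_g$ carries a one-dimensional local stable and unstable manifold. Inspecting the graph-transform proof in \cite{crovisier2018strongly}, both the radius of these manifolds and their $\cC^1$-modulus of continuity in $x$ depend only on the CP-constants and on $\|g^N\|_{\cC^2}$; both being uniformly controlled on $\mathcal{F}$, we obtain a uniform lower bound $r_0 = r_0(K_1, K_2) > 0$ on the radius together with a uniform modulus of continuity. Combined with the uniform angle bound $\eta < 1$, this yields a uniform $\delta = \delta(K_1, K_2) > 0$ such that whenever $x, y \in \Lambda_g$ satisfy $d(x, y) < \delta$ one has $\Ws_{loc}(x) \intert \Wu_{loc}(y) \neq \emptyset$ and $\Ws_{loc}(y) \intert \Wu_{loc}(x) \neq \emptyset$.

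Finally, $M$ admits a cover by $L = L(K_1, K_2, M)$ balls of radius $\delta$, and the argument of Theorem \ref{thm:FinitenessHomClas} applies verbatim: any $g$-ergodic measure $\mu$ satisfying \eqref{eq:mainProp} charges $\Lambda_g$, hence at least one of these balls, and two measures both charging the same ball are homoclinically related for $g^N$, hence for $g$. Setting $C := L$ completes the proof, and the bound on the number of ergodic measures of maximal entropy follows from Ruelle's inequality exactly as in Corollary \ref{cor:FinMME}. The main obstacle is the extraction of the uniform radius $r_0$ in the second step: one must open the proof in \cite{crovisier2018strongly} and check that the Banach fixed point producing the stable manifold has contraction rate and domain of definition depending only on the CP-constants and on $\|g^N\|_{\cC^2}$, both uniformly controlled on $\mathcal{F}$ by the $\cC^2$-bound.
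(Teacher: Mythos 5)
Your proposal matches the paper's own (brief) argument step for step: fix the iterate $N$ and the CP-constants of Proposition~\ref{prop:MainProp} so they depend only on $K_1,K_2$, invoke the uniform lower bound on the size of the Crovisier--Pujals local stable and unstable manifolds over the CP-set (which depends only on the CP-constants and on $\|g^{\pm N}\|_{\cC^2}$), and conclude by covering with a controlled number of $\delta$-balls as in Theorem~\ref{thm:FinitenessHomClas}. The paper states these uniformity checks as a remark rather than a detailed proof; you fill them in in the intended way, correctly identifying the uniform radius coming from \cite{crovisier2018strongly} as the one nontrivial input and sensibly covering all of $M$ (rather than the $g$-dependent set $\Lambda_g$) so that the bound $C$ depends only on $M$ and $\delta(K_1,K_2)$.
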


To prove this, note that the assumption on the $\cC^2$-norm of $g$ allows us to choose the constants for the CP-hyperbolic set constructed in the proof of Proposition \ref{prop:MainProp} in a way that depends only on $K_1$ and $K_2$. Moreover, on such a set, the size of stable and unstable local manifolds is also uniformly bounded in terms of the $\cC^2$-norm, and thus by $K_1$ and $K_2$. It seems possible to obtain an explicit bound using this approach, but we will not carry out these computations here.

\subsection{SPR property} 
In \cite{buzzi2025strong}, Buzzi, Crovisier, and Sarig introduce a new class of diffeomorphisms called SPR (strongly positively recurrent).
\begin{defn}[SPR diffeomorphism]
	Let $f$ be a $\cC^r$, $r>1$ diffeomorphism of a closed surface $M$. We say that $f$ is SPR if there exists constants $0<h<h_{top}(f)$, $\tau >0$ and a Pesin block $\Lambda$ such that for all $\mu \in \bbP^s_e(f)$, we have:
	\begin{equation*}
		h(f,\mu) > h \implies \mu(\Lambda) >\tau.
	\end{equation*}
	\label{def:SPR}
\end{defn}

In their paper, Buzzi, Crovisier, and Sarig derive several consequences of the SPR property, such as the existence and finiteness of measures of maximal entropy (m.m.e.'s) and exponential mixing for m.m.e.'s. They also prove that any $\cC^{\infty}$ diffeomorphism $f$ of a closed surface satisfying $h_{top}(f) > 0$ has the SPR property.

We emphasize that if we assume $h_{top}(f) > tR(f)$ for some universal constant $t \in (0,1)$ sufficiently close to $1$, then the proof of Proposition \ref{prop:MainProp}, combined with a combinatorial application of Pliss's lemma (see Buzzi, Crovisier, and Sarig's paper), provides a simpler proof of the SPR property for diffeomorphisms with very large entropy.

\bibliographystyle{plain}
\bibliography{biblio_Finitness_mme}
\bigskip

\begin{center}
	\emph{Mat\'eo Ghezal} (mateoghezalmath@gmail.com)\\
	Laboratoire de Math\'ematiques d'Orsay\\
	CNRS - UMR 8628\\
	Universit\'e Paris-Saclay\\
	Orsay 91405, France.
\end{center}
\end{document}